\newcommand{\Z}{{\mathbb Z}}                   
\newcommand{\R}{{\mathbb R}}                   
\newcommand{\C}{{\mathbb C}}                   
\newcommand{\Mod}{{\mathcal M}}               
\newcommand{\CP}[1]{\mathbb{C}P^{#1}}      
\newcommand{\E}{{\mathcal E}}
\DeclareMathOperator{\Dol}{Dol}
\DeclareMathOperator{\Betti}{B}
\DeclareMathOperator{\degen}{deg}
\newtheorem{lem}{Lemma}
\newtheorem{theorem}{Theorem}
\title[Geometric P=W conjecture via plumbing calculus]{The Geometric P=W conjecture in the Painlev\'e cases via plumbing calculus}
\author{Andr\'as N\'emethi}
\address{Alfr\'ed R\'enyi Institute of Mathematics, 1053. Budapest, Re\'altanoda
utca 13-15. Hungary}
\email{nemethi.andras@renyi.hu}
\author{Szil\'ard Szab\'o}
\address{Budapest University of Technology and Economics, 1111. Budapest,
Egry J\'ozsef utca 1. H \'ep\"ulet, Hungary, and
Alfr\'ed R\'enyi Institute of Mathematics, 1053. Budapest, Re\'altanoda
utca 13-15. Hungary}
\email{szabosz@math.bme.hu, szabo.szilard@renyi.hu}
\begin{document}

\begin{abstract}
We use plumbing calculus to prove the homotopy commutativity assertion of the Geometric P=W conjecture in all Painlev\'e cases. 
We discuss the resulting Mixed Hodge structures on Dolbeault and Betti moduli spaces. 
\end{abstract}

\maketitle

\section{Introduction and statement of the main result}\label{sec:Intro}

In this paper we will deal with certain moduli spaces associated to the Painlev\'e equations.
Painlev\'e equations are classically parameterized by a finite set
\begin{equation}\label{eq:I-VI}
   I, II, III(D6), III(D7), III(D8), IV, V_{\degen}, V, VI
\end{equation}
and some continuous parameters. In order to give a uniform treatment of all Painlev\'e cases, throughout we let $X$ stand for any of the symbols~\eqref{eq:I-VI}.
The generic choice of the continuous parameters needed to define the spaces has no relevance to our topological considerations, therefore we omit to spell them out.
We will denote the Dolbeault moduli space associated to the Painlev\'e equation $PX$ by $\Mod_{\Dol}^X$; likewise, we denote the associated wild character variety
(or Betti moduli space) by ${\Mod}_{\Betti}^X$.
The  Dolbeault moduli space $\Mod_{\Dol}^X$ parameterizes certain singular \emph{Higgs bundles} $(\E, \theta )$ of rank $2$ over $\CP1$, with logarithmic or irregular
singularities, up to S-equivalence.
The definitions of these spaces depend on choices of irregular type, of eigenvalues of the residues of the Higgs field at the punctures
and of parabolic weights; we choose these weights generically so that the moduli spaces be smooth manifolds.
For a more detailed description of the moduli-theoretic definition of these spaces, see~\cite{ISS1,ISS2,ISS3,Sz_BNR,Sz_PW} and the references therein.
On the other hand, ${\Mod}_{\Betti}^X$ parameterizes certain monodromy and Stokes data, up to overall conjugation.
Again, the definitions of these spaces depend on choices of some parameters (eigenvalues of the local monodromy matrices and weights).
These spaces have been studied in detail by M.~van~der~Put and M.~Saito~\cite{PS}; for a general treatment of wild character varieties, see work of P.~Boalch~\cite{Boalch1,Boalch3}.

Now, to the parameters fixed to define $\Mod_{\Dol}^X$ there correspond parameters on the Betti side by Simpson's table~\cite{Sim_Hodge} and its extension to the irregular case by Biquard and Boalch~\cite{BB}.
It follows from~\cite{BB} coupled with the irregular Riemann--Hilbert correspondence that for corresponding choices, the Dolbeault and Betti spaces are diffeomorphic smooth complex surfaces
$$
  \psi: \Mod_{\Dol}^X \to {\Mod}_{\Betti}^X.
$$

It follows from~\cite{ISS1,ISS2,ISS3} that the Dolbeault moduli spaces naturally come equipped with a proper map
\begin{align}
   h: \Mod_{\Dol}^X & \to \C \label{eq:Hitchin} \\
   (\E, \theta ) & \mapsto \det (\theta ) \notag
\end{align}
called \emph{Hitchin map}, which in the cases at hand is an elliptic fibration. 
The original properness result of N.~Hitchin~\cite{Hit_SBIS} was generalized to the singular case by K.~Yokogawa in~\cite[Corollary~5.2]{Y}. 
For $R \in \R$ we denote by $B_R(0)$ the open disk of radius $R$ centered at $0$ in the base of the Hitchin map.

For a simplicial complex $\mathcal{N}$, we denote by $| \mathcal{N} |$ the associated topological space.
It follows from~\cite{PS} that there exists a smooth compactification $\widetilde{\Mod}_{\Betti}^X$ of
$\Mod_{\Betti}^X$ by a simple normal crossing divisor $\tilde{D}_{\infty}^X$ with smooth components and with nerve complex $\mathcal{N}^X$ such that
$| \mathcal{N}^X |$ is homotopy equivalent to $S^1$.
The $0$-skeleton $\mathcal{N}_0^X$ is in bijection with irreducible components of $\tilde{D}_{\infty}^X$, the edges in the $1$-skeleton $\mathcal{N}_1^X$
correspond to intersection points between the corresponding divisor components.
Let $T_j^X \subset {\Mod}_{\Betti}^X$ denote a tubular neighbourhood of the $j$'th irreducible component of $\tilde{D}_{\infty}^X$ and set
$$
  T^X = \cup_{j \in \mathcal{N}_0^X}\ T_j^X ,
$$
which is a neighbourhood of $\tilde{D}_{\infty}^X$.
Let $\{ \phi_j^X \}_{j \in \mathcal{N}_0^X}$ be a partition of unity on $T^X$ dominated by the covering $\{ T_j^X \}_{j \in \mathcal{N}_0^X}$.
In~\cite{Sim}, C.\;Simpson defines the continuous map
$$
  \phi = (\phi_j)_{j \in \mathcal{N}_0^X}: T^X \to \R^{\mathcal{N}_0^X}
$$
whose image is the associated topological space of a simplicial complex isomorphic to $\mathcal{N}^X$.

\begin{theorem}\label{thm:Simpson}
 For all sufficiently large $R \in \R$, there exists a homotopy commutative square
 $$
  \xymatrix{\Mod_{\Dol}^X \setminus h^{-1} (B_R(0))  \ar[d]_{h} \ar[r]^{\hspace*{6mm}\psi} & {\Mod}_{\Betti}^X \cap T^X \ar[d]^{\phi} \\
  \C \setminus B_R(0) \ar[r] & | \mathcal{N}^X |. }
 $$
\end{theorem}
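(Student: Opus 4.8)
The plan is to reduce the homotopy commutativity of the square to a statement in $H^1$ of the common neighbourhood of infinity, and then to read off that statement from the plumbing descriptions of this neighbourhood on the two sides. First one writes $N_{\Dol}:=\Mod_{\Dol}^X\setminus h^{-1}(B_R(0))$ and $N_{\Betti}:=\Mod_{\Betti}^X\cap T^X$. Since $h$ is proper \cite{Y} and, by \cite{ISS1,ISS2,ISS3}, an elliptic fibration, it has finitely many singular fibres, so for $R\gg 0$ the map $h\colon N_{\Dol}\to\C\setminus B_R(0)$ is a fibre bundle with smooth elliptic fibre; thus $N_{\Dol}$ is a deleted tubular neighbourhood of a Kodaira fibre $F_\infty^X$ of a relatively minimal elliptic surface extending $h$ over $\CP1$, and it deformation retracts onto the torus bundle $M^X:=\partial N_{\Dol}$ over $S^1$ with Kodaira monodromy $\mu^X\in SL_2(\Z)$. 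Likewise $N_{\Betti}=T^X\setminus\tilde D_\infty^X$ is a deleted tubular neighbourhood of the simple normal crossing divisor of \cite{PS} and retracts onto $\partial N_{\Betti}$. Because $\C\setminus B_R(0)\simeq S^1\simeq|\mathcal{N}^X|$ and $S^1$ is a $K(\Z,1)$, homotopy classes of maps $N_{\Dol}\to|\mathcal{N}^X|$ are classified by $H^1(N_{\Dol};\Z)$, and the square admits a homotopy commutative completion by a suitable bottom arrow precisely when $\psi^*\phi^*(\eta)$ lies in the image of $h^*\colon H^1(\C\setminus B_R(0);\Z)\to H^1(N_{\Dol};\Z)$, where $\eta$ generates $H^1(|\mathcal{N}^X|;\Z)$.

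\emph{Computing the end by plumbing calculus.} Both $N_{\Dol}$ and $N_{\Betti}$ carry tautological plumbing descriptions, as the complements of the plumbing cores in the plumbed four-manifolds built on the dual graph $\Gamma_{\Dol}^X$ of $F_\infty^X$, respectively on the dual graph $\Gamma_{\Betti}^X$ of $\tilde D_\infty^X$, whose underlying graph is the nerve $\mathcal{N}^X$. The computational heart of the proof is to treat each of the nine symbols \eqref{eq:I-VI} in turn: one reads $\Gamma_{\Betti}^X$ off van der Put--Saito's smooth compactification \cite{PS}, brings it to Neumann normal form by the moves of plumbing calculus, and checks that this normal form agrees with the Kodaira plumbing graph $\Gamma_{\Dol}^X$. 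This simultaneously re-proves that $\partial N_{\Dol}$ and $\partial N_{\Betti}$ are diffeomorphic, independently of $\psi$, and pins down the Kodaira type of $F_\infty^X$. The output needed below is that in every case $b_1(M^X)=1$: on the Dolbeault side because $\mu^X$ has finite order $3$, $4$ or $6$, or is minus a unipotent matrix, so that $1$ is not an eigenvalue of $\mu^X$; on the Betti side because, although $\mathcal{N}^X$ is a cycle, the intersection matrix of $\Gamma_{\Betti}^X$ turns out to be non-degenerate. Hence $H^1(N_{\Dol};\Z)\cong H^1(M^X;\Z)\cong\Z$.

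\emph{Conclusion.} The map $h\colon N_{\Dol}\to\C\setminus B_R(0)$ is a fibre bundle with connected fibre over the annulus, hence $\pi_1$-surjective, while $\phi$ restricted to $N_{\Betti}$ is the composite of the $\pi_1$-surjective inclusion $N_{\Betti}\hookrightarrow T^X$ with Simpson's map $T^X\to|\mathcal{N}^X|$, which realizes the nerve and is therefore $\pi_1$-surjective as well. Consequently $h^*\colon\Z\to H^1(N_{\Dol};\Z)\cong\Z$ is a split injection, hence an isomorphism, so its image is all of $H^1(N_{\Dol};\Z)$; in particular it contains $\psi^*\phi^*(\eta)$. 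By the first step, choosing the bottom arrow $\C\setminus B_R(0)\to|\mathcal{N}^X|$ to represent the class $(h^*)^{-1}\psi^*\phi^*(\eta)$ — a map of the appropriate degree, up to the homotopy equivalences with $S^1$ — makes the two composites $N_{\Dol}\to|\mathcal{N}^X|$ into maps to a $K(\Z,1)$ inducing the same homomorphism on $\pi_1$, hence homotopic, which is the claimed homotopy commutative square.

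\emph{The main obstacle.} The hard part will be the plumbing-calculus bookkeeping of the second step — extracting the compactification dual graphs for all nine symbols \eqref{eq:I-VI}, reducing them to normal form and matching them with the Kodaira graphs — and in particular checking that the intersection form of $\Gamma_{\Betti}^X$ is always non-degenerate, equivalently that $F_\infty^X$ is never of type $I_n$; this is exactly what forces $b_1(M^X)=1$ and makes the conclusion go through. If some symbol did yield an $I_n$ fibre at infinity, then $b_1(M^X)=2$, and the argument above would only show that $h^*(\eta')$ and $\psi^*\phi^*(\eta)$ are both primitive in $H^1(N_{\Dol};\Z)\cong\Z^2$, where $\eta'$ generates $H^1(\C\setminus B_R(0);\Z)$; one would then need the finer fact, again read off the plumbing data, that the diffeomorphism $\psi$ carries the cohomology class dual to the elliptic fibre of $h$ at infinity to the one dual to Simpson's loop around the cyclic nerve.
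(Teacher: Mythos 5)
Your proposal is correct and follows essentially the same route as the paper: reduce to the boundary $3$-manifolds, which are torus bundles over $S^1$ (Dolbeault side from the Kodaira fibre $F_\infty^X$, Betti side from the plumbing graph of the resolved normal crossing divisor of \cite{PS}), identify them by Neumann's plumbing calculus, observe that $b_1=1$ so that maps to $S^1=K(\Z,1)$ are classified by $H^1(\cdot,\Z)\cong\Z$, and conclude from the $\pi_1$-surjectivity of both vertical maps. The computational step you defer (reading off the Betti-side graphs and reducing them by the moves R1, R2/4, R7) is exactly the content of the paper's Sections 2--4, and your predicted outcome --- non-$I_n$ fibres, nondegenerate intersection form, $b_1=1$ --- is what that calculation yields.
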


This statement in higher generality was conjectured by L.\;Katzarkov, A.\;Noll, P.\;Pandit and C.\;Simpson~\cite[Conjecture 1.1]{KNPS}
and subsequently named Geometric $P = W$ conjecture by C.\;Simpson~\cite[Conjecture 11.1]{Sim}.
Previously, in~\cite{HdCM} de Cataldo, Hausel and Migliorini had conjectured a relationship, called  $P = W$ conjecture, between the perverse Leray filtration induced by the Hitchin map on the cohomology of Dolbeault moduli spaces
and Deligne's weight filtration on the cohomology of Betti moduli spaces. The Geometric $P = W$ conjecture is supposed to provide a geometric explanation of the $P = W$ conjecture on the lowest weight graded pieces.
As A.~Harder~\cite{Harder} points out, it would follow from a conjecture of D.~Auroux~\cite[Conjecture~7.3]{Auroux} that such an implication indeed exists, namely the Geometric $P = W$ conjecture does imply the lowest weight graded part
of the original $P = W$ conjecture.
In~\cite{Sz_PW,Sz_abelianization} the second author proved the above homotopy commutativity assertion for the Painlev\'e VI case using asymptotic abelianization of solutions of Hitchin's equations,
and proved that for Painlev\'e moduli spaces the Geometric $P = W$ conjecture implies the original $P = W$ conjecture.
Here, we offer a different approach to homotopy commutativity which works in all Painlev\'e cases, by employing the plumbing calculus developped by W.~Neumann \cite{Neu}.


\noindent {\bf Acknowledgements:}
 During the preparation of this manuscript both authors were
  supported by the \emph{Lend\"ulet} Low Dimensional Topology
  grant of the Hungarian Academy of Sciences and by the grant KKP126683 of NKFIH.

\section{Structure of the spaces}

In this section we recall from~\cite{Sz_PW} some relevant results about the structure of the moduli spaces in question.

\subsection{Dolbeault space}\label{subsec:Dolbeault}

According to \cite[Lemma~2]{Sz_PW} there exists an elliptic fibration
$$
  \tilde{h}: E(1) \to \CP1
$$
on the rational elliptic surface
\begin{equation}\label{eq:elliptic_surface}
   E(1) = \CP2 \# 9 \overline{\CP{}}^2
\end{equation}
and an embedding
$$
  \Mod_{\Dol}^{X} \hookrightarrow E(1); \  \Mod_{\Dol}^{X} =E(1)\setminus \tilde{h}^{-1}(\infty)
$$
so that
$$
  h = \tilde{h}|_{\Mod_{\Dol}^{X}}.
$$
We denote the fiber at infinity of $\tilde{h}$ by
$$
  F_{\infty}^{X} = \tilde{h}^{-1} (\infty ).
$$
The type of the curves $F_{\infty}^{X}$ is determined by $X$ and is listed in the second column of Table~\ref{table}.
It follows from this analysis that a punctured neighbourhood $\Mod_{\Dol}^X \setminus h^{-1} (B_R(0))$ of $F_{\infty}^{X}$ in $E(1)$ has
the homotopy type of a $T^2$-fibration $Y^X_{\Dol}$ over the circle $S^1_R$ of radius $R\gg 0$.

\subsection{Betti space}\label{subsec:Betti}

It is shown in \cite{PS} that for all $X$ the space $\Mod_{\Betti}^X$ is a smooth affine cubic surface defined by a polynomial
\begin{equation}\label{eq:cubic}
  f^X (x_1, x_2, x_3) = x_1 x_2 x_3 + Q^X (x_1, x_2, x_3)
\end{equation}
for some (not necessarily homogeneous) affine quadric $Q^X$.
We consider the homogeneous cubic polynomial $F^X$ in the homogeneous coordinates $[X_0 : X_1 : X_2 : X_3]$ associated with  $f^X$ and the projective compactification
\begin{equation}\label{eq:compactifcation_Betti}
 \overline{\Mod}_{\Betti}^X = (F^X) \in \CP3
\end{equation}
of $\Mod_{\Betti}^X$. The intersection of $\overline{\Mod}_{\Betti}^X$ with the plane at infinity $\CP2_{\infty}$ defined by $X_0 = 0$ has equation
$$
  X_1 X_2 X_3 = 0,
$$
hence the divisor at infinity is the union
$$
  \bar{D}_{\Betti}^X = L_1 \cup L_2 \cup L_3
$$
of three lines $L_i = \{X_i=0\}  \subset \CP2_{\infty}$ in general position.
It is shown in \cite[Proposition~2]{Sz_PW} that some of the pairwise intersection points $L_i \cap L_{i'}$ are singular points of $\overline{\Mod}_{\Betti}^X$, and their singularity types are computed.
We included the sets of singularity types in the third column of Table \ref{table}, with the singularities at the various points separated by a $+$ sign.
For instance, the symbol $A_1 + A_1$ in the case $X=IV$ means that two of the intersection points are $A_1$ singular points of $\overline{\Mod}_{\Betti}^X$, and the third one is a smooth point.
We define
\begin{equation}\label{eq:min_res_sing}
   \sigma: \widetilde{\Mod}_{\Betti}^X \to \overline{\Mod}_{\Betti}^X
\end{equation}
as the minimal resolution of singularities. Let
$$
  \widetilde{D}_{\Betti}^X = \sigma^{-1} \bar{D}_{\Betti}^X
$$
stand for the total transform of $\bar{D}_{\Betti}^X$ under $\sigma$. Let us denote by $L_i^{\sigma}$ the proper transform of the line $L_i$ under $\sigma$.

\begin{table}
\begin{center}
\begin{tabular}{|l|l|r|}
 \hline
 $X$ & $F_{\infty}^X$ & $\mbox{Sing} \left( \overline{\Mod}_{\Betti}^X \right)$ \\
 \hline
 \hline
 $VI$ & $D_4^{(1)}$ & $\emptyset$ \\
 \hline
 $V$ & $D_5^{(1)}$ & $A_1$  \\
 \hline
 $V_{\degen}$ & $D_6^{(1)}$ & $A_2$ \\
 \hline
 $III(D6)$ & $D_6^{(1)}$ & $A_2$ \\
 \hline
 $III(D7)$ & $D_7^{(1)}$ & $A_3$ \\
 \hline
 $III(D8)$ & $D_8^{(1)}$ & $A_4$ \\
 \hline
 $IV$ & $E_6^{(1)}$ & $A_1 + A_1$ \\
 \hline
 $II$ & $E_7^{(1)}$ & $A_1 + A_1 + A_1$ \\
 \hline
 $I$ & $E_8^{(1)}$ & $A_2 + A_1 + A_1$ \\
 \hline
\end{tabular}
\end{center}
 \caption{Fiber at infinity of $\Mod_{\Dol}^X$ and singularities of $\overline{\Mod}_{\Betti}^X$}
 \label{table}
\end{table}

\begin{lem}
A punctured neighbourhood ${\Mod}_{\Betti}^X \cap T^X$ of $\widetilde{D}_{\Betti}^X$ in $\widetilde{\Mod}_{\Betti}^X$ has the homotopy type of ${\Mod}_{\Betti}^X \partial \cap T^X$,
which is a plumbed $3$-manifold $Y^X_{\Betti}$ over the following decorated graphs.
The vertices labelled by $-1$ correspond to $L_1^{\sigma}, L_2^{\sigma}, L_3^{\sigma}$, and the
vertices labelled by $-2$ correspond to the exceptional curves of the $A_*$--singularities of $\overline{\Mod}_{\Betti}^X$.
\end{lem}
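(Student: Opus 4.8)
The plan is to identify the punctured neighbourhood of $\widetilde{D}_{\Betti}^X$ with a plumbed $3$-manifold by reading off a plumbing graph from the resolution $\sigma$ of the cubic surface $\overline{\Mod}_{\Betti}^X$. First I would observe that $\widetilde{D}_{\Betti}^X = \sigma^{-1}(\bar D_{\Betti}^X)$ is a simple normal crossing divisor in the smooth surface $\widetilde{\Mod}_{\Betti}^X$: its components are the three proper transforms $L_1^\sigma, L_2^\sigma, L_3^\sigma$ together with, for each $A_k$ singular point of $\overline{\Mod}_{\Betti}^X$ lying on some $L_i \cap L_{i'}$, a chain of $k$ rational $(-2)$-curves introduced by the minimal resolution. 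The boundary ${\Mod}_{\Betti}^X\partial \cap T^X$ of a regular (tubular) neighbourhood of such an SNC divisor is by definition the plumbed $3$-manifold associated to the \emph{dual graph} of the divisor, where each vertex carries the self-intersection number of the corresponding curve as its Euler-number decoration and each edge records an intersection point of two components; the deformation retraction of ${\Mod}_{\Betti}^X \cap T^X$ onto this boundary (equivalently, the statement that the punctured neighbourhood has the homotopy type of the link) is the standard fact that a regular neighbourhood of a compact set deformation retracts to it and that removing the divisor from its regular neighbourhood gives a mapping cylinder over the link. This gives the first two sentences of the Lemma directly, modulo computing the decorations.

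The real content is then the computation of the self-intersection numbers, i.e.\ that the vertices of $L_1^\sigma, L_2^\sigma, L_3^\sigma$ are decorated $-1$ and the vertices of the resolution chains are decorated $-2$, and that the edges are as claimed. For the $(-2)$'s this is automatic: the minimal resolution of an $A_k$ surface singularity is a linear chain of $k$ smooth rational curves each of self-intersection $-2$, and consecutive curves in the chain meet transversally in one point while the extreme curves of the chain meet the two proper transforms $L_i^\sigma, L_{i'}^\sigma$ passing through that singular point; this is precisely the $A_k$ plumbing segment inserted between the $L_i$-vertex and the $L_{i'}$-vertex. For the $L_i^\sigma$ I would compute $(L_i^\sigma)^2$ from $L_i^2$ in $\CP2_\infty$ and the multiplicities of $\sigma$ at the singular points on $L_i$. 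In $\CP2_\infty$ the three lines $L_1,L_2,L_3$ are in general position, so $L_i^2 = 1$ as divisors in $\CP2_\infty$; but we need the self-intersection of $L_i$ \emph{inside the cubic surface} $\overline{\Mod}_{\Betti}^X$, which by adjunction (the cubic is a possibly singular del Pezzo, its smooth locus has trivial canonical-plus-something count) one computes to be $-1$ at a smooth point of the surface, and each $A_k$ point blown up on $L_i$ lowers the self-intersection of its proper transform by $1$ per infinitely near point — here is where one must check case by case, using Table~\ref{table}, that the bookkeeping works out so that the final decoration on every $L_i^\sigma$ is exactly $-1$. Concretely: the arithmetic genus and the structure of $\bar D_{\Betti}^X = L_1\cup L_2\cup L_3$ as an anticanonical (triangle) divisor on the Gorenstein del Pezzo $\overline{\Mod}_{\Betti}^X$ forces $(L_i)^2 + (\text{number of nodes of the triangle on }L_i) = $ a fixed value, and after the minimal resolution each node on $L_i$ contributes $-1$ to $(L_i^\sigma)^2$; summing gives $(L_i^\sigma)^2 = -1$ uniformly.

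The main obstacle, and the step I would spend the most care on, is precisely this last self-intersection computation together with the verification that the resulting graph is ``the following decorated graph'' referred to in the statement — i.e.\ matching the combinatorics (how many $(-2)$-chains, of which lengths, between which pairs of $L_i$-vertices) against the singularity data $\mathrm{Sing}(\overline{\Mod}_{\Betti}^X)$ recorded in the third column of Table~\ref{table} for each of the nine Painlev\'e cases. I expect no conceptual difficulty here but genuine casework: for $X = VI$ the surface is smooth so the graph is just a triangle of three $(-1)$-vertices; for $X = III(D8)$ one $A_4$ point produces a chain of four $(-2)$'s along one edge of the triangle; and so on through the table, with the $I$ case ($A_2 + A_1 + A_1$) decorating all three edges. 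I would organize this as a single figure displaying the nine graphs, then verify in each that (a) the underlying graph is a cycle (consistent with $|\mathcal N^X|\simeq S^1$ from the Introduction, once the $(-2)$-vertices are contracted), (b) the $(-2)$-chains have the lengths dictated by Table~\ref{table}, and (c) the three distinguished vertices carry $-1$ by the adjunction computation above. Finally, the identification of ${\Mod}_{\Betti}^X \cap T^X$ with this plumbed manifold up to homotopy follows because $\Mod_{\Betti}^X = \widetilde{\Mod}_{\Betti}^X \setminus \widetilde{D}_{\Betti}^X$ and the punctured tubular neighbourhood $\Mod_{\Betti}^X \cap T^X = T^X \setminus \widetilde{D}_{\Betti}^X$ deformation retracts onto the link $\partial T^X = {\Mod}_{\Betti}^X\partial \cap T^X$, which is by construction $Y^X_{\Betti}$.
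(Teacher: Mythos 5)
Your skeleton (read off the dual graph of the SNC divisor $\widetilde{D}_{\Betti}^X$, with each $A_k$ point contributing a chain of $k$ rational $(-2)$-curves and all the real content concentrated in $(L_i^\sigma)^2=-1$) is the right one, but the step on which everything rests does not work as you have written it. You assert that $L_i$ has self-intersection $-1$ ``at a smooth point of the surface'' and that each resolved singular point on $L_i$ lowers the self-intersection of the proper transform by $1$ per infinitely near point; taken together these would give $(L_i^\sigma)^2=-1-(\text{number of singular points on }L_i)$, which already fails in the simplest singular case $X=V$ (a single $A_1$ would force $-2$) and contradicts the uniform answer $-1$ you need. The sentence offered as a repair --- that $(L_i)^2$ plus the number of nodes of the triangle on $L_i$ is ``a fixed value'' --- carries no information, since every $L_i$ contains exactly two nodes of the triangle, and in any case $L_i^2$ is only a rational (Mumford) intersection number at the non-Cartier points. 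A correct version of your adjunction route does exist and is cleaner than any bookkeeping: the minimal resolution of du Val singularities is crepant, so $K_{\widetilde{\Mod}_{\Betti}^X}=\sigma^*K_{\overline{\Mod}_{\Betti}^X}$, the projection formula gives $K_{\widetilde{\Mod}_{\Betti}^X}\cdot L_i^\sigma=K_{\overline{\Mod}_{\Betti}^X}\cdot L_i=-H\cdot L_i=-1$, and adjunction on the smooth rational curve $L_i^\sigma$ yields $(L_i^\sigma)^2=-2-(-1)=-1$ with no case analysis at all. The paper takes a different route entirely: it cuts $\overline{\Mod}_{\Betti}^X$ by the pencil of planes through a generic line $\ell\subset\CP2_{\infty}$, blows up the three base points to obtain an elliptic fibration, identifies the fiber at infinity as a Kodaira $I_N$ cycle whose components all have self-intersection $-2$, and then undoes the single blow-up on each $L_i$ to recover $(L_i^\sigma)^2=-1$.

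A second, smaller gap: you assert that the two extreme components of each $A_k$ resolution chain meet $L_i^\sigma$ and $L_{i'}^\sigma$ respectively, i.e.\ that the chain is inserted in series between the two line-vertices so that the dual graph remains a cycle. A priori both proper transforms could meet the same end of the chain, or $L_i^\sigma$ and $L_{i'}^\sigma$ could still intersect each other after resolution; ruling this out requires either the local normal form $x_1x_2+(\text{cubic})=0$ of the surface germ at $L_i\cap L_{i'}$ (so that the two coordinate branches separate to opposite ends of the $A_k$ string), or it falls out of the paper's argument for free, since a Kodaira fiber of type $I_N$ is by definition a cycle. Either fix is routine, but as stated this is a gap in precisely the combinatorial assertion --- which decorated graphs arise --- that the Lemma is making.
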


The underlying graph is what we have denoted by $\mathcal{N}^X$ in Section~\ref{sec:Intro}, and the label of the $i$'th vertex is the Euler-number $-b_i^X$ of the $S^1$-fibration attached
to the corresponding component, used for the plumbing.

\begin{picture}(100,60)(-90,0) 

\put(0,40){\makebox(0,0){$X=VI$}}

\put(30,40){\circle*{4}}        
\put(60,40){\circle*{4}}
\put(90,40){\circle*{4}}

\put(30,40){\line(1,0){30}}                      
\put(60,40){\line(1,0){30}}
\qbezier(30,40)(60,0)(90,40)

\put(30,50){\makebox(0,0){$-1$}}
\put(60,50){\makebox(0,0){$-1$}}
\put(90,50){\makebox(0,0){$-1$}}

\end{picture}

\begin{picture}(100,60)(-75,0) 

\put(5,40){\makebox(0,0){$X=V$}}

\put(30,40){\circle*{4}}        
\put(60,40){\circle*{4}}
\put(90,40){\circle*{4}}
\put(120,40){\circle*{4}}

\put(30,40){\line(1,0){30}}                      
\put(60,40){\line(1,0){30}}
\put(90,40){\line(1,0){30}}
\qbezier(30,40)(75,0)(120,40)

\put(30,50){\makebox(0,0){$-1$}}
\put(60,50){\makebox(0,0){$-1$}}
\put(90,50){\makebox(0,0){$-1$}}
\put(120,50){\makebox(0,0){$-2$}}

\end{picture}

\begin{picture}(100,60)(-60,0) 

\put(-25,40){\makebox(0,0){$X\in\{V_{\degen}, III(D6) \}$}}

\put(30,40){\circle*{4}}        
\put(60,40){\circle*{4}}
\put(90,40){\circle*{4}}
\put(120,40){\circle*{4}}
\put(150,40){\circle*{4}}

\put(30,40){\line(1,0){30}}                      
\put(60,40){\line(1,0){30}}
\put(90,40){\line(1,0){30}}
\put(120,40){\line(1,0){30}}
\qbezier(30,40)(90,0)(150,40)

\put(30,50){\makebox(0,0){$-1$}}
\put(60,50){\makebox(0,0){$-1$}}
\put(90,50){\makebox(0,0){$-1$}}
\put(120,50){\makebox(0,0){$-2$}}
\put(150,50){\makebox(0,0){$-2$}}

\end{picture}

\begin{picture}(100,60)(-45,0) 

\put(-10,40){\makebox(0,0){$X = III(D7)$}}

\put(30,40){\circle*{4}}        
\put(60,40){\circle*{4}}
\put(90,40){\circle*{4}}
\put(120,40){\circle*{4}}
\put(150,40){\circle*{4}}
\put(180,40){\circle*{4}}

\put(30,40){\line(1,0){30}}                      
\put(60,40){\line(1,0){30}}
\put(90,40){\line(1,0){30}}
\put(120,40){\line(1,0){30}}
\put(150,40){\line(1,0){30}}
\qbezier(30,40)(105,0)(180,40)

\put(30,50){\makebox(0,0){$-1$}}
\put(60,50){\makebox(0,0){$-1$}}
\put(90,50){\makebox(0,0){$-1$}}
\put(120,50){\makebox(0,0){$-2$}}
\put(150,50){\makebox(0,0){$-2$}}
\put(180,50){\makebox(0,0){$-2$}}

\end{picture}

\begin{picture}(100,60)(-30,0) 

\put(-10,40){\makebox(0,0){$X = III(D8)$}}

\put(30,40){\circle*{4}}        
\put(60,40){\circle*{4}}
\put(90,40){\circle*{4}}
\put(120,40){\circle*{4}}
\put(150,40){\circle*{4}}
\put(180,40){\circle*{4}}
\put(210,40){\circle*{4}}

\put(30,40){\line(1,0){30}}                      
\put(60,40){\line(1,0){30}}
\put(90,40){\line(1,0){30}}
\put(120,40){\line(1,0){30}}
\put(150,40){\line(1,0){30}}
\put(180,40){\line(1,0){30}}
\qbezier(30,40)(120,0)(210,40)

\put(30,50){\makebox(0,0){$-1$}}
\put(60,50){\makebox(0,0){$-1$}}
\put(90,50){\makebox(0,0){$-1$}}
\put(120,50){\makebox(0,0){$-2$}}
\put(150,50){\makebox(0,0){$-2$}}
\put(180,50){\makebox(0,0){$-2$}}
\put(210,50){\makebox(0,0){$-2$}}

\end{picture}

\begin{picture}(100,60)(-60,0) 

\put(0,40){\makebox(0,0){$X=IV$}}

\put(30,40){\circle*{4}}        
\put(60,40){\circle*{4}}
\put(90,40){\circle*{4}}
\put(120,40){\circle*{4}}
\put(150,40){\circle*{4}}

\put(30,40){\line(1,0){30}}                      
\put(60,40){\line(1,0){30}}
\put(90,40){\line(1,0){30}}
\put(120,40){\line(1,0){30}}
\qbezier(30,40)(90,0)(150,40)

\put(30,50){\makebox(0,0){$-1$}}
\put(60,50){\makebox(0,0){$-1$}}
\put(90,50){\makebox(0,0){$-2$}}
\put(120,50){\makebox(0,0){$-1$}}
\put(150,50){\makebox(0,0){$-2$}}

\end{picture}

\begin{picture}(100,60)(-45,0) 

\put(0,40){\makebox(0,0){$X=II$}}

\put(30,40){\circle*{4}}        
\put(60,40){\circle*{4}}
\put(90,40){\circle*{4}}
\put(120,40){\circle*{4}}
\put(150,40){\circle*{4}}
\put(180,40){\circle*{4}}

\put(30,40){\line(1,0){30}}                      
\put(60,40){\line(1,0){30}}
\put(90,40){\line(1,0){30}}
\put(120,40){\line(1,0){30}}
\put(150,40){\line(1,0){30}}
\qbezier(30,40)(105,0)(180,40)

\put(30,50){\makebox(0,0){$-1$}}
\put(60,50){\makebox(0,0){$-2$}}
\put(90,50){\makebox(0,0){$-1$}}
\put(120,50){\makebox(0,0){$-2$}}
\put(150,50){\makebox(0,0){$-1$}}
\put(180,50){\makebox(0,0){$-2$}}

\end{picture}

\begin{picture}(100,60)(-30,0) 

\put(0,40){\makebox(0,0){$X=I$}}

\put(30,40){\circle*{4}}        
\put(60,40){\circle*{4}}
\put(90,40){\circle*{4}}
\put(120,40){\circle*{4}}
\put(150,40){\circle*{4}}
\put(180,40){\circle*{4}}
\put(210,40){\circle*{4}}

\put(30,40){\line(1,0){30}}                      
\put(60,40){\line(1,0){30}}
\put(90,40){\line(1,0){30}}
\put(120,40){\line(1,0){30}}
\put(150,40){\line(1,0){30}}
\put(180,40){\line(1,0){30}}
\qbezier(30,40)(120,0)(210,40)

\put(30,50){\makebox(0,0){$-1$}}
\put(60,50){\makebox(0,0){$-2$}}
\put(90,50){\makebox(0,0){$-2$}}
\put(120,50){\makebox(0,0){$-1$}}
\put(150,50){\makebox(0,0){$-2$}}
\put(180,50){\makebox(0,0){$-1$}}
\put(210,50){\makebox(0,0){$-2$}}

\end{picture}

\begin{proof}
Let $\ell \subset \CP2_{\infty}$ be a generic line, and consider the pencil of projective planes in $\CP3$ containing $\ell$.
This pencil exhibits $\overline{\Mod}_{\Betti}^X$ as a pencil of cubic curves, with base locus
$$
  B = (L_1 \cap \ell ) \cup (L_2 \cap \ell ) \cup (L_3 \cap \ell ).
$$
By genericity, each line $L_i$ contains exactly one point of $B$, and these points are not points of the form $L_i \cap L_{i'}$ for $i \neq i'$.
The total space $\overline{\Mod}_{\Betti}^X$ of the pencil is singular, and~\eqref{eq:min_res_sing} is its minimal resolution. It follows that
$\widetilde{\Mod}_{\Betti}^X$ is a smooth elliptic pencil with base locus
$$
  B = (L_1^{\sigma} \cap \ell ) \cup (L_2^{\sigma} \cap \ell ) \cup (L_3^{\sigma} \cap \ell ).
$$
 Let
$$
  \xymatrix{E \ar[d] \ar[r]^{\omega} & \widetilde{\Mod}_{\Betti}^{PX}  \\
  \CP1 & }
$$
be the associated elliptic fibration, obtained by blowing up $B$ in $\widetilde{\Mod}_{\Betti}^X$.
Let us denote by $L_i^{\omega}$ the proper transform of $L_i^{\sigma}$ under $\omega$.
From the configuration of $L_1, L_2, L_3$ we know that the fiber at infinity of $E$ contains $L_i^{\omega}$ for each $i$, and these
components are part of a cycle of rational curves.
According to Kodaira's list~\cite{Kod}, the fiber at infinity is of type $I_N$ for some $3 \leq N$, i.e. a cycle of length $N$ with all self-intersection numbers equal to $-2$.
In particular, we have
$$
  ( L_i^{\omega}, L_i^{\omega} ) = -2.
$$
Now, as $L_i^{\omega}$ is the proper transform of $L_i^{\sigma}$ under blow-up at a single point, we infer that
$$
  ( L_i^{\sigma}, L_i^{\sigma} ) = -1.
$$
The components of the $I_N$ singular fiber other than $L_1^{\sigma}, L_2^{\sigma}, L_3^{\sigma}$ are exceptional divisors of $\sigma$,
so $B$ is disjoint from them.
This shows that their self-intersection numbers in $\widetilde{\Mod}_{\Betti}^{PX}$ are equal to their self-intersection numbers in $E$, namely $-2$.
The lenghts of the chains of $-2$ curves in the plumbing graph are therefore equal to the Milnor numbers of the singular points of the given
$\overline{\Mod}_{\Betti}^X$, as given in Table~\ref{table}. This finishes the proof.
\end{proof}
Another proof can be done via a case by case study of the local equations of the
surface germs at the intersection points of the three lines at infinity
(for their equations see \cite[Sections~3.1-3.9]{Sz_PW}).

\section{Plumbing calculus}\label{sec:plumbing}

Oriented plumbed $3$-manifolds arise from $S^1$-bundles over smooth real surfaces, identified along the intersection pattern given by a decorated graph (called plumbing graph). Let us provide some details
(for more see e.g. \cite{Neu}).

The vertices of the graphs are in bijection with compact oriented (real) smooth surfaces.
Each vertex is  decorated by the Euler number of the $S^1$-bundle over the corresponding surface that is  used
in the plumbing construction.
The genus of all components relevant to us is $0$ (and the surfaces admit no boundary components), hence we omit all
such decorations (which in general theory might appear).
The edges between vertices correspond to plumbing operations of the corresponding $S^1$-bundles.
If $(v,u)$ is an edge connecting the vertices $v$ and $u$, then we fix small discs $D_v$ (resp. $D_u$)
in the surface $S_v$ and $S_u$ corresponding to $v$ and $u$, we fix trivializations of the bundles over them,
we delete their interiors, and we identify  the boundaries (both tori) by  the following pattern.
The edges  are decorated by a sign. A positive edge means  an identification of the  the base-fiber $S^1$ factors of $v$ side with the fiber-base
factors of the $u$ side by keeping their orientation.
A negative edge means the same identifications with opposite orientations.
If the sign of an edge is missing then it is a positive one.

If $C$ is a normal crossing compact complex curve (with irreducible components $\{C_i\}_i$)
in a smooth complex surface, and $Y$ is the boundary of
a small tubular neighbourhood of $C$, then $Y$ admits a plumbing graph (the `dual graph' of $C$)
 such that the vertices correspond to the irreducible components, the Euler decorations are the Euler numbers of the
 normal bundles of the embedded curve components, and the edges (all decorated by positive sign) correspond to the
 intersection points of the curve components.

Usually different graphs might produce diffeomorphic 3-manifolds.
In \cite{Neu} W.~Neumann has given a list of moves R1--R8, R0', R2/4 on decorated graphs such that if two oriented plumbed 3-manifolds are  diffeomorphic
by an orientation preserving diffeomorphism then the corresponding plumbing graphs
can be connected by a finite sequence of moves.
In this section we provide the moves on graphs that will be of use to us in order to identify the oriented plumbed $3$-manifolds $\partial (\Mod_{\Dol}^X \setminus h^{-1} (B_R(0)))$
and $\partial ({\Mod}_{\Betti}^X \cap T^X )$. Let us emphasize that we do not give an exhaustive list of Neumann's moves, instead we provide here only  those  ones
that  will be used here in our proofs.

The first moves we describe are two different versions of blow-down, labelled R1 in \cite[Section~2]{Neu}, and O2 in \cite[Chapter~2]{Nemethi_konyv}:

\begin{picture}(100,40)(5,10)

\put(40,20){\circle*{4}} \put(80,20){\circle*{4}}
\put(120,20){\circle*{4}}                           

\put(40,20){\line(1,0){80}}                        

\put(40,20){\line(-2,-1){30}}                      
\put(40,20){\line(-2, 1){30}}
\put(20,25){\makebox(0,0){$\cdot$}}                
\put(20,20){\makebox(0,0){$\cdot$}}
\put(20,15){\makebox(0,0){$\cdot$}}

\put(120,20){\line(2,-1){30}}                      
\put(120,20){\line(2, 1){30}}
\put(135,25){\makebox(0,0){$\cdot$}}                
\put(135,20){\makebox(0,0){$\cdot$}}
\put(135,15){\makebox(0,0){$\cdot$}}

\put(42,28){\makebox(0,0){$e_i$}}                  
\put(80,29){\makebox(0,0){$\epsilon$}}
\put(118,28){\makebox(0,0){$e_j$}}


\put(170,20){\vector(1,0){20}}                     
\put(170,20){\vector(-1,0){20}}
\put(170,30){\makebox(0,0){R1}}

\put(240,20){\circle*{4}}                    
\put(242,28){\makebox(0,0){$e_i-\epsilon$}}  

\put(240,20){\line(-2,-1){30}}                      
\put(240,20){\line(-2, 1){30}}

\put(220,25){\makebox(0,0){$\cdot$}}                
\put(220,20){\makebox(0,0){$\cdot$}}
\put(220,15){\makebox(0,0){$\cdot$}}

\put(300,20){\circle*{4}} 
\put(298,28){\makebox(0,0){$e_j-\epsilon$}}  

\put(300,20){\line(2,-1){30}}                      
\put(300,20){\line(2, 1){30}}
\put(315,25){\makebox(0,0){$\cdot$}}                
\put(315,20){\makebox(0,0){$\cdot$}}
\put(315,15){\makebox(0,0){$\cdot$}}

\put(240,20){\line(1,0){60}} 
\put(270,26){\makebox(0,0){$-\epsilon$}}      
\put(265,20){\makebox(0,0){$\times$}}   

\end{picture}

\begin{picture}(100,50)(5,0)
\put(40,20){\circle*{4}} \put(120,20){\circle*{4}}   

\put(40,20){\line(-2,-1){30}}                      
\put(40,20){\line(-2, 1){30}}
\put(20,25){\makebox(0,0){$\cdot$}}                
\put(20,20){\makebox(0,0){$\cdot$}}
\put(20,15){\makebox(0,0){$\cdot$}}

\put(42,28){\makebox(0,0){$e_i$}}                  
\put(120,29){\makebox(0,0){$\epsilon$}}

\qbezier(40,20)(80,30)(120,20)
\qbezier(40,20)(80,10)(120,20)



\put(170,20){\vector(1,0){20}}                     
\put(170,20){\vector(-1,0){20}}
\put(170,30){\makebox(0,0){R1}}


\put(240,20){\line(-2,1){30}}       
\put(240,20){\line(-2,-1){30}}
\put(240,20){\circle*{4}}
\put(220,24){\makebox(0,0){$\vdots$}}
\put(240,32){\makebox(0,0){$e_i-2\epsilon$}}

\put(240,20){\line(2,1){20}} \put(240,20){\line(2,-1){20}}
\qbezier(260,30)(290,40)(293,20) \qbezier(260,10)(290,0)(293,20)

\put(300,20){\makebox(0,0){$-\epsilon$}}
\put(293,20){\makebox(0,0){$\times$}}
\end{picture}

\noindent where $\epsilon=\pm 1$, and the cross indicates the edge  where we apply the move (this will be helpful
in the calculus below).

The second move we need is labelled R2/4 in \cite[Section~3]{Neu}, and O7 in \cite[Chapter~2]{Nemethi_konyv};
it is the composition of one unoriented handle absorption and two $\mathbb{R}P^2$-extrusions:

\begin{picture}(100,70)(0,-15)
\put(40,20){\circle*{4}} \put(120,20){\circle*{4}}   

\put(40,20){\line(-2,-1){30}}                      
\put(40,20){\line(-2, 1){30}}
\put(20,25){\makebox(0,0){$\cdot$}}                
\put(20,20){\makebox(0,0){$\cdot$}}
\put(20,15){\makebox(0,0){$\cdot$}}

\put(42,28){\makebox(0,0){$e_i$}}                  
\put(120,29){\makebox(0,0){$0$}}

\qbezier(40,20)(80,30)(120,20)
\qbezier(40,20)(80,10)(120,20)

\put(80,35){\makebox(0,0){$\epsilon$}}
\put(80,5){\makebox(0,0){$\epsilon $}}


\put(170,20){\vector(1,0){20}}                     
\put(170,20){\vector(-1,0){20}}
\put(170,30){\makebox(0,0){R2/4}}


\put(240,20){\circle*{4}}                    
\put(242,28){\makebox(0,0){$e_i$}}  

\put(240,20){\line(-2,-1){30}}                      
\put(240,20){\line(-2, 1){30}}

\put(220,25){\makebox(0,0){$\cdot$}}                
\put(220,20){\makebox(0,0){$\cdot$}}
\put(220,15){\makebox(0,0){$\cdot$}}

\put(300,30){\makebox(0,0){$e$}} \put(300,10){\makebox(0,0){$e'$}} 
\put(333,10){\makebox(0,0){$2e_1'$}}
\put(333,50){\makebox(0,0){$2e_1$}}
\put(333,30){\makebox(0,0){$2e_2$}}
\put(333,-10){\makebox(0,0){$2e_2'$}}
\put(300,40){\line(2,1){20}}                      
 \put(300,40){\line(2,-1){20}}
    \put(300,0){\line(2,1){20}}                      
 \put(300,0){\line(2,-1){20}}                   
\put(240,20){\line(3,1){60}} \put(240,20){\line(3,-1){60}}
\put(300,40){\circle*{4}}\put(300,0){\circle*{4}}
\put(320,50){\circle*{4}}\put(320,30){\circle*{4}}
\put(320,10){\circle*{4}}\put(320,-10){\circle*{4}}

\end{picture}

\noindent where $\epsilon=\pm 1$, $e_1,e_2, e_1',e_2'\in\{-1,+1\}$ and
$e=(e_1+e_2)/2$, $e'=(e'_1+e'_2)/2$.

The last relevant moves are three versions of Seifert graph exchange labelled R7 in  \cite[Section~2]{Neu}, and O6 in \cite[Chapter~2]{Nemethi_konyv}:

\begin{picture}(100,100)

\put(35,35){\circle*{4}} 
\put(35,47){\makebox(0,0){$-1$}}
\qbezier (35,35) (85,55) (88,35) 
\qbezier (35,35) (85,15) (88,35) 
\put(95,35){\makebox(0,0){$-$}}

\put(130,35){\vector(1,0){20}}                     
\put(130,35){\vector(-1,0){20}}
\put(130,45){\makebox(0,0){R7}}

\put(180,35){\circle*{4}}                         
\put(210,35){\circle*{4}}
\put(240,35){\circle*{4}}
\put(270,35){\circle*{4}}
\put(300,35){\circle*{4}}
\put(240,65){\circle*{4}}
\put(240,95){\circle*{4}}

\put(180,35){\line(1,0){30}}                      
\put(210,35){\line(1,0){30}}
\put(240,35){\line(1,0){30}}
\put(270,35){\line(1,0){30}}
\put(240,35){\line(0,1){30}}
\put(240,65){\line(0,1){30}}

\put(180,45){\makebox(0,0){$-2$}}                 
\put(210,45){\makebox(0,0){$-2$}}
\put(230,45){\makebox(0,0){$-2$}}
\put(270,45){\makebox(0,0){$-2$}}
\put(300,45){\makebox(0,0){$-2$}}
\put(230,65){\makebox(0,0){$-2$}}
\put(230,95){\makebox(0,0){$-2$}}

\end{picture}

\begin{picture}(100,70)(20,0)

\put(35,35){\circle*{4}} 
\put(35,47){\makebox(0,0){$0$}}
\qbezier (35,35) (85,55) (88,35) 
\qbezier (35,35) (85,15) (88,35) 
\put(95,35){\makebox(0,0){$-$}}

\put(130,35){\vector(1,0){20}}                     
\put(130,35){\vector(-1,0){20}}
\put(130,45){\makebox(0,0){R7}}

\put(180,35){\circle*{4}}                         
\put(210,35){\circle*{4}}
\put(240,35){\circle*{4}}
\put(270,35){\circle*{4}}
\put(300,35){\circle*{4}}
\put(330,35){\circle*{4}}
\put(360,35){\circle*{4}}
\put(270,65){\circle*{4}}

\put(180,35){\line(1,0){30}}                      
\put(210,35){\line(1,0){30}}
\put(240,35){\line(1,0){30}}
\put(270,35){\line(1,0){30}}
\put(300,35){\line(1,0){30}}
\put(330,35){\line(1,0){30}}
\put(270,35){\line(0,1){30}}

\put(180,45){\makebox(0,0){$-2$}}                 
\put(210,45){\makebox(0,0){$-2$}}
\put(240,45){\makebox(0,0){$-2$}}
\put(260,45){\makebox(0,0){$-2$}}
\put(300,45){\makebox(0,0){$-2$}}
\put(330,45){\makebox(0,0){$-2$}}
\put(360,45){\makebox(0,0){$-2$}}
\put(260,65){\makebox(0,0){$-2$}}

\end{picture}

\begin{picture}(100,70)(40,0)

\put(35,35){\circle*{4}} 
\put(35,47){\makebox(0,0){$1$}}
\qbezier (35,35) (85,55) (88,35) 
\qbezier (35,35) (85,15) (88,35) 
\put(95,35){\makebox(0,0){$-$}}

\put(130,35){\vector(1,0){20}}                     
\put(130,35){\vector(-1,0){20}}
\put(130,45){\makebox(0,0){R7}}

\put(180,35){\circle*{4}}                         
\put(210,35){\circle*{4}}
\put(240,35){\circle*{4}}
\put(270,35){\circle*{4}}
\put(300,35){\circle*{4}}
\put(330,35){\circle*{4}}
\put(360,35){\circle*{4}}
\put(390,35){\circle*{4}}
\put(240,65){\circle*{4}}

\put(180,35){\line(1,0){30}}                      
\put(210,35){\line(1,0){30}}
\put(240,35){\line(1,0){30}}
\put(270,35){\line(1,0){30}}
\put(240,35){\line(0,1){30}}
\put(270,35){\line(1,0){30}}
\put(300,35){\line(1,0){30}}
\put(330,35){\line(1,0){30}}
\put(360,35){\line(1,0){30}}

\put(180,45){\makebox(0,0){$-2$}}                 
\put(210,45){\makebox(0,0){$-2$}}
\put(230,45){\makebox(0,0){$-2$}}
\put(270,45){\makebox(0,0){$-2$}}
\put(300,45){\makebox(0,0){$-2$}}
\put(230,65){\makebox(0,0){$-2$}}
\put(330,45){\makebox(0,0){$-2$}}	
\put(360,45){\makebox(0,0){$-2$}}	
\put(390,45){\makebox(0,0){$-2$}}

\end{picture}

\section{An orientation preserving diffeomorphisms of the boundaries of the
Dolbeault and Betti spaces via plumbing calculus}\label{sec:diffeo}

First we concentrate on the first line of the diagram of Theorem \ref{thm:Simpson}.

Note that $ \Mod_{\Dol}^X \setminus h^{-1} (B_R(0)) $ is diffeomorphic (by an orientation preserving diffeomorphism) to $(0,1]\times h^{-1}(S^1_R)$, where $S^1_R=\partial \overline{B_R(0)}$.
Hence, by the discussion from subsection \ref{subsec:Dolbeault},
$  \Mod_{\Dol}^X \setminus h^{-1} (B_R(0)) \simeq (0,1]\times \tilde{h}^{-1}(S^1_R)$.

On the other hand, $\Mod_{\Betti}^X\cap T^X\simeq (0,1]\times (\Mod_{\Betti}^X\cap \partial T^X)$.
Then, we need to prove that the corresponding boundaries $Y_{\Dol}^X:=\tilde{h}^{-1}(S^1_R)$ and
$Y_{\Betti}^X:=\Mod_{\Betti}^X\cap \partial T^X$ are diffeomorphic by an orientation preserving diffeomorphism.
Since both sides have their (oriented) plumbing graphs, the needed diffeomorphism
 will  be verified by plumbing calculus.

Next, we present  the corresponding sequences of moves of the  plumbing calculus,
for each  Painlev\'e cases. We will content ourselves
with describing the cases $X = I$ and $X=III(D6)$, because the remaining cases arise from one of these two cases by relatively straightforward modifications.

We start by the case $X = I$: according to Table~\ref{table}, we need to start at the graph corresponding to the affine root system $E_8^{(1)}$, and
get to the corresponding decorated graph pictured in Section~\ref{subsec:Betti}, using the moves given in Section~\ref{sec:plumbing}.
Here is a sequence of moves connecting these plumbing diagrams:

\begin{picture}(300,250)(0,-50)


\put(0,150){\circle*{4}}                         
\put(30,150){\circle*{4}}
\put(60,150){\circle*{4}}
\put(90,150){\circle*{4}}
\put(120,150){\circle*{4}}
\put(150,150){\circle*{4}}
\put(180,150){\circle*{4}}
\put(210,150){\circle*{4}}
\put(60,180){\circle*{4}}

\put(0,150){\line(1,0){30}}                      
\put(30,150){\line(1,0){30}}
\put(60,150){\line(1,0){30}}
\put(90,150){\line(1,0){30}}
\put(60,150){\line(0,1){30}}
\put(120,150){\line(1,0){30}}
\put(150,150){\line(1,0){30}}
\put(180,150){\line(1,0){30}}

\put(0,160){\makebox(0,0){$-2$}}                 
\put(30,160){\makebox(0,0){$-2$}}
\put(50,160){\makebox(0,0){$-2$}}
\put(90,160){\makebox(0,0){$-2$}}
\put(120,160){\makebox(0,0){$-2$}}
\put(50,180){\makebox(0,0){$-2$}}
\put(150,160){\makebox(0,0){$-2$}}	
\put(180,160){\makebox(0,0){$-2$}}	
\put(210,160){\makebox(0,0){$-2$}}


\put(230,150){\vector(1,0){30}}
\put(245,160){\makebox(0,0){R7}}


\put(280,150){\circle*{4}} 
\put(272,150){\makebox(0,0){$1$}}
\qbezier (280,150) (330,170) (333,150) 
\qbezier (280,150) (330,130) (333,150) 
\put(340,150){\makebox(0,0){$-$}}
\put(333,150){\makebox(0,0){$\times$}}


\put(300,130){\vector(0,-1){20}}
\put(310,122){\makebox(0,0){R1}}


\put(280,90){\circle*{4}} 
\put(330,90){\circle*{4}} 
\qbezier(280,90)(305,110)(330,90) 
\qbezier(280,90)(305,70)(330,90) 
\put(280,100){\makebox(0,0){$3$}}
\put(330,100){\makebox(0,0){$1$}}
\put(305,100){\makebox(0,0){$\times$}}
\put(305,80){\makebox(0,0){$\times$}}


\put(265,90){\vector(-1,0){30}}
\put(250,100){\makebox(0,0){R1}}


\put(220,90){\circle*{4}} 
\put(170,90){\circle*{4}}
\put(195,65){\circle*{4}}
\put(195,115){\circle*{4}}

\put(170,90){\line(1,1){25}} 
\put(220,90){\line(-1,1){25}}
\put(170,90){\line(1,-1){25}}
\put(220,90){\line(-1,-1){25}}

\put(220,100){\makebox(0,0){$-1$}}
\put(170,100){\makebox(0,0){$1$}}
\put(180,65){\makebox(0,0){$-1$}}
\put(180,115){\makebox(0,0){$-1$}}

\put(182,102){\makebox(0,0){$+$}}
\put(182,78){\makebox(0,0){$+$}}


\put(155,90){\vector(-1,0){30}}
\put(140,100){\makebox(0,0){R1}}


\put(110,90){\circle*{4}} 
\put(85,115){\circle*{4}}
\put(85,65){\circle*{4}}
\put(55,115){\circle*{4}}
\put(55,65){\circle*{4}}
\put(30,90){\circle*{4}}

\put(110,90){\line(-1,1){25}}
\put(110,90){\line(-1,-1){25}}
\put(85,115){\line(-1,0){30}}
\put(85,65){\line(-1,0){30}}
\put(55,115){\line(-1,-1){25}}
\put(55,65){\line(-1,1){25}}

\put(110,100){\makebox(0,0){$-1$}}
\put(85,125){\makebox(0,0){$-2$}}
\put(95,65){\makebox(0,0){$-2$}}
\put(55,125){\makebox(0,0){$-1$}}
\put(45,65){\makebox(0,0){$-1$}}
\put(30,100){\makebox(0,0){$-1$}}

\put(43,103){\makebox(0,0){$+$}}


\put(70,55){\vector(0,-1){30}}
\put(80,40){\makebox(0,0){R1}}


\put(110,-10){\circle*{4}} 
\put(85,15){\circle*{4}}
\put(85,-35){\circle*{4}}
\put(55,15){\circle*{4}}
\put(55,-35){\circle*{4}}
\put(30,-10){\circle*{4}}
\put(30,15){\circle*{4}}

\put(110,-10){\line(-1,1){25}}
\put(110,-10){\line(-1,-1){25}}
\put(85,15){\line(-1,0){30}}
\put(85,-35){\line(-1,0){30}}
\put(55,-35){\line(-1,1){25}}
\put(30,15){\line(1,0){25}}
\put(30,15){\line(0,-1){25}}

\put(110,0){\makebox(0,0){$-1$}}
\put(85,25){\makebox(0,0){$-2$}}
\put(95,-35){\makebox(0,0){$-2$}}
\put(55,25){\makebox(0,0){$-2$}}
\put(45,-35){\makebox(0,0){$-1$}}
\put(20,-10){\makebox(0,0){$-2$}}
\put(30,25){\makebox(0,0){$-1$}}

\end{picture}

We now come to the case $X=III(D6)$: we provide a sequence of plumbing calculus moves that allows one to pass from the graph corresponding to the affine root system $D_6^{(1)}$ into the
decorated graph given in Section~\ref{subsec:Betti}.

\begin{picture}(300,270)(10,-20)

\put(40,200){\circle*{4}}
\put(70,200){\circle*{4}}
\put(100,200){\circle*{4}}                           
\put(10,185){\circle*{4}}
\put(10,215){\circle*{4}}
\put(130,185){\circle*{4}}
\put(130,215){\circle*{4}}

\put(40,200){\line(1,0){60}}                        
\put(40,200){\line(-2,-1){30}}
\put(40,200){\line(-2, 1){30}}
\put(100,200){\line(2,-1){30}}
\put(100,200){\line(2, 1){30}}

\put(42,208){\makebox(0,0){$-2$}}                  
\put(70,209){\makebox(0,0){$-2$}}
\put(98,208){\makebox(0,0){$-2$}}
\put(10,193){\makebox(0,0){$-2$}}
\put(10,223){\makebox(0,0){$-2$}}
\put(130,193){\makebox(0,0){$-2$}}
\put(130,223){\makebox(0,0){$-2$}}

\put(55,200){\makebox(0,0){$\times$}}


\put(150,200){\vector(1,0){30}}
\put(165,210){\makebox(0,0){R1}}

\put(220,200){\circle*{4}}
\put(250,200){\circle*{4}}
\put(280,200){\circle*{4}}                            
\put(310,200){\circle*{4}}
\put(190,185){\circle*{4}}
\put(190,215){\circle*{4}}
\put(340,185){\circle*{4}}
\put(340,215){\circle*{4}}

\put(220,200){\line(1,0){90}}                        
\put(220,200){\line(-2,-1){30}}
\put(220,200){\line(-2, 1){30}}
\put(310,200){\line(2,-1){30}}
\put(310,200){\line(2, 1){30}}

\put(220,208){\makebox(0,0){$-1$}}                  
\put(250,209){\makebox(0,0){$1$}}
\put(280,208){\makebox(0,0){$-1$}}
\put(310,208){\makebox(0,0){$-2$}}
\put(190,193){\makebox(0,0){$-2$}}
\put(190,223){\makebox(0,0){$-2$}}
\put(340,193){\makebox(0,0){$-2$}}
\put(340,223){\makebox(0,0){$-2$}}


\put(265,175){\vector(0,-1){25}}
\put(275,165){\makebox(0,0){R1}}


\put(235,120){\circle*{4}}
\put(265,120){\circle*{4}}
\put(295,120){\circle*{4}}                           
\put(205,105){\circle*{4}}
\put(205,135){\circle*{4}}
\put(325,105){\circle*{4}}
\put(325,135){\circle*{4}}

\put(235,120){\line(1,0){60}}                        
\put(235,120){\line(-2,-1){30}}
\put(235,120){\line(-2, 1){30}}
\put(295,120){\line(2,-1){30}}
\put(295,120){\line(2, 1){30}}

\put(235,128){\makebox(0,0){$-1$}}                  
\put(265,128){\makebox(0,0){$2$}}
\put(295,128){\makebox(0,0){$-1$}}
\put(205,113){\makebox(0,0){$-2$}}
\put(205,143){\makebox(0,0){$-2$}}
\put(325,113){\makebox(0,0){$-2$}}
\put(325,143){\makebox(0,0){$-2$}}
\put(280,120){\makebox(0,0){$\times$}}


\put(180,120){\vector(-1,0){30}}
\put(165,128){\makebox(0,0){R2/4}}


\put(40,120){\circle*{4}}
\put(100,120){\circle*{4}}

\qbezier(40,120)(70,140)(100,120)
\qbezier(40,120)(70,100)(100,120)

\put(40,128){\makebox(0,0){$2$}}
\put(100,128){\makebox(0,0){$0$}}
\put(70,130){\makebox(0,0){$\times$}}
\put(70,110){\makebox(0,0){$\times$}}


\put(70,100){\vector(0,-1){30}}
\put(80,85){\makebox(0,0){R1}}


\put(40,20){\circle*{4}}       
\put(100,20){\circle*{4}}
\put(70,50){\circle*{4}}
\put(70,-10){\circle*{4}}

\put(40,20){\line(1,1){30}}        
\put(40,20){\line(1,-1){30}}
\put(100,20){\line(-1,1){30}}
\put(100,20){\line(-1,-1){30}}

\put(40,28){\makebox(0,0){$0$}}
\put(100,28){\makebox(0,0){$-2$}}
\put(70,58){\makebox(0,0){$-1$}}
\put(78,-10){\makebox(0,0){$-1$}}
\put(55,35){\makebox(0,0){$+$}}


\put(120,20){\vector(1,0){30}}
\put(135,28){\makebox(0,0){R1}}


\put(180,20){\circle*{4}}       
\put(240,20){\circle*{4}}
\put(210,50){\circle*{4}}
\put(210,-10){\circle*{4}}
\put(180,50){\circle*{4}}

\put(180,50){\line(1,0){30}}
\put(180,50){\line(0,-1){30}}
\put(210,50){\line(1,-1){30}}
\put(210,-10){\line(1,1){30}}
\put(210,-10){\line(-1,1){30}}

\put(170,28){\makebox(0,0){$-1$}}
\put(180,58){\makebox(0,0){$-1$}}
\put(210,58){\makebox(0,0){$-2$}}
\put(240,28){\makebox(0,0){$-2$}}
\put(218,-10){\makebox(0,0){$-1$}}

\end{picture}

\section{Proof of Theorem~\ref{thm:Simpson}}\label{sec:proof}
Since $|{\mathcal N}^X|=S^1$, we wish to prove that there exists a homotopy commutative diagram
$$
  \xymatrix{Y^X_{\Dol} \ar[d]_{h} \ar[r]^{\psi} & Y_{\Betti}^X \ar[d]^{\phi} \\
   S^1_R \ar[r] & S^1 }
$$
As the diffeomorphism $\psi:Y^X_{\Dol}\to Y^X_{\Betti}$ was already established, we focus on the corresponding
maps to $S^1$. If $Y$ is a connected oriented 3-manifold, then the homotopy classes of maps
$Y\to S^1$ are classified by 
$$
  [Y, S^1]=[Y, K(\Z,1)]=H^1(Y,\Z)={\rm Hom}(H_1(Y,\Z),\Z).
$$
 Hence we need to determine $H_1(Y, \Z)$ in both cases (though they are isomorphic).

 In general, if $Y$ is a plumbed 3-manifold with plumbing graph $\Gamma$ (and vertices ${\mathcal V}$),
 let $\{I_{v,u}\}_{v,u\in{\mathcal V}}$ be its  intersection matrix, $g=\sum_{v\in{\mathcal V}}g_v$
 be the sum of genera of the surfaces used in the plumbing
 (in all our cases this is zero), and $b_1(|{\mathcal N}^\Gamma|)$ the number of independent 1-cycles
 in the topological realization of $\Gamma$.
 (Recall that when $\Gamma$ has  no loops and all edge decorations are positive, then
 $I_{v,v}$ is the Euler number of $v$, otherwise $I_{v,u}$ ($u\not=v$)
 is the number of edges connecting $v$ and $u$.)
 Then, it is well-known (as follows from a Mayer-Vietoris argument) that
 $$H_1(Y,\Z)={\rm coker}(I)\oplus \Z^{2g+ b_1(|{\mathcal N}^\Gamma|)}, $$
 hence the first Betti number of $Y$ is
 $$b_1(Y)={\rm rank}\,{\ker}(I)+2g+ b_1(|{\mathcal N}^\Gamma|).$$
 Hence in our cases we get that $b_1(Y^X_{\Dol})=b_1(Y^X_{\Betti})=1$. Hence in both cases
 $[Y,S^1]=H^1(Y,\Z)=\Z$. Since the orientation of $|{\mathcal N}^X|$ (of the target of $\phi$) is not well-defined
 (depends on a choice of cyclic ordering of vertices in the dual graph) the homotopy diagram can be realized  only up to a sign of the isomorphism 
 \begin{align*}
  H^1(S^1_R,\Z)=\Z & \to H^1(|{\mathcal N}^X|,\Z)=\Z \\
  1 & \mapsto \pm 1 .
 \end{align*}
  Hence, it is enough to prove that at first-homology level both maps $h$ and $\phi$ induce epimorphisms
  $H^1(Y,\Z)\twoheadrightarrow \Z$. But this property is automatically satisfied in both cases,
  since both maps $h$ and $\phi$ are torus bundles over $S^1$:
  in the  first case the bundle structure is induced by the elliptic fibration, while in the second case
  by the specific plumbing construction of the cyclic  graphs, see e.g.
  \cite{nakamura,Neu,Nemethi_konyv} (where even the monodromy action is computed in terms of the Euler numbers).
  Since the fiber (the torus) is connected, surjectivity follows in both cases (say, from the long homotopy exact sequence).

  We emphasize, that in this case we have even a genuine (not only homotopy) identification of the
  torus bundles of $h$ and $\phi$ (up to an orientation ambiguity of the base spaces $S^1$).

\section{Discussion of Mixed Hodge Structures on the homology groups}

Assume that $S$ is a smooth compact complex projective surface  and $C\subset S$ is a normal crossing curve
with smooth irreducible components $\{C_v\}_{v\in {\mathcal V}}$. Then the boundary $Y$ of a small
tubular neighborhood of $C$ in $S$ admits a plumbing representation, where
the $C$-components are identified with the vertices and the intersection matrix of the components with
the intersection matrix associated with the (dual) graph. Furthermore, the complex (co)homology of $Y$
supports a Mixed Hodge structure. Following~\cite[Section~6.9]{EN} we have the following facts
(where $H_i$ denotes $H_i(Y,\C)$)
$$\dim\,{\rm Gr}^W_{-2}H_1={\rm rank}\, \ker(I), \ \ \dim\,{\rm Gr}^W_{-1}H_1=2g, \ \
\dim\,{\rm Gr}^W_{0}H_1=b_1(|{\mathcal N}^\Gamma|);$$
$$\dim\,{\rm Gr}^W_{-2}H_2={\rm rank}\, \ker(I), \ \ \dim\,{\rm Gr}^W_{-3}H_2=2g, \ \
\dim\,{\rm Gr}^W_{-4}H_2=b_1(|{\mathcal N}^\Gamma|).$$
In particular,
$$
    H_1 ( Y^X_{\Dol}, \C )  = \mbox{Gr}_{-2}^W H_1 ( Y^X_{\Dol}, \C ) \ \ \mbox{and} \ \
    H_1 ( Y^X_{\Betti}, \C )  =  \mbox{Gr}_0^W H_1 ( Y^X_{\Betti}, \C );
$$
$$
    H_2 ( Y^X_{\Dol}, \C )  = \mbox{Gr}_{-2}^W H_2 ( Y^X_{\Dol}, \C ) \ \ \mbox{and} \ \
    H_2 ( Y^X_{\Betti}, \C )  =  \mbox{Gr}_{-4}^W H_2 ( Y^X_{\Betti}, \C ).
$$
Note that from the topology (the oriented diffeomorphism type) of the corresponding plumbed 3-manifolds one cannot read the Mixed Hodge structure of the boundary of a tubular neighborhood of a  projective plane, for this one needs
data from the algebraic realization. Indeed, the above isomorphisms illustrate the fact that the plumbing calculus might mix up the three
types of contributions, which for the MHS are essentially distinct. (For similar examples see e.g.~\cite{SS}.)

Notice that $\Mod^X_{\Dol}$ has $h^{-1} (B_R(0))$ as deformation retract, so for topological purposes we may identify these spaces. 
Consider next the pairs $Y^X_{\Dol}=\partial \Mod^X_{\Dol}\subset \Mod^X_{\Dol}$ and  
$Y^X_{\Betti}=\partial \Mod^X_{\Betti}\subset \Mod^X_{\Betti}$, abbreviated uniformly as 
$Y=\partial \Mod\subset \Mod$. Then one has (part of) the homology exact sequence (with complex coefficients)
$$\cdots \to H_3(\Mod, Y)\to H_2(Y)\to H_2(\Mod)\to \cdots$$
Here $\Mod$ is an oriented smooth 4-manifold with boundary $Y$, hence by Lefschetz duality 
$  H_3(\Mod, Y)=H_1(\Mod)^*$. Note that 
$H_1(\Mod^X_{\Betti})=H_1(\Mod^X_{\Dol})=0$ (see~\cite[Lemma~3]{Sz_PW}), 
hence in both cases, $\C=H_2(Y)$ embeds into $H_2(\Mod)$. Since $\C=H_2(Y)$
in the two cases supports different MHS weight (and the morphism preserves the weights),
the MHS in $H_2$ of the moduli spaces $\Mod$ will also be different. 
The responsible objects for this MHS difference are the different plumbing graphs (which can 
be identified by plumbing calculus, but their data needed for Hodge considerations are not the same). 

This provides a direct geometric explanation in our cases of the well-known change of weights between the Dolbeault and the Betti spaces. 
Namely, the weight of the generator of $H_2 ( Y^X_{\Betti}, \C )$ shows that $H_2 (\Mod^X_{\Betti}, \C )$ is not pure, while $H_2 (\Mod^X_{\Dol}, \C )$ is pure. 
On the other hand, as it was already pointed out in~\cite{Sz_PW}, it follows from the geometric characterization of the perverse filtration provided in 
\cite[Theorem~4.1.1]{dCM} in terms of the flag filtration that the weight of the generator of $H_2 ( Y^X_{\Dol}, \C )$ (the generic Hitchin fiber) in 
$H_2(\Mod^X_{\Dol})$ has perverse Leray degree different from the classes in $H_2(\Mod^X_{\Dol}, Y^X_{\Dol})$.

\end{document}